\numberwithin{equation}{section}
\theoremstyle{plain}
\newtheorem{theorem}{Theorem}
\newtheorem{lemma}[theorem]{Lemma}
\newtheorem{corollary}[theorem]{Corollary}
\newtheorem*{vN}{Theorem}
\theoremstyle{definition}
\def\appmod{\mathrel|\joinrel\approx}
\DeclareMathOperator{\AV}{AV}
\DeclareMathOperator{\card}{card}
\DeclareMathOperator{\density}{density}
\DeclareMathOperator{\ev}{ev}
\DeclareMathOperator{\img}{img}
\DeclareMathOperator{\Neg}{neg}
\DeclareMathOperator{\tp}{tp}
\DeclareMathOperator{\Ulim}{\mathcal{U}lim}
\newcommand{\cA}{\mathcal{A}}
\newcommand{\tA}{\widetilde{\cA}}
\newcommand{\cB}{\mathcal{B}}
\newcommand{\tB}{\widetilde{\cB}}
\newcommand{\Folner}{F{\o}lner}
\newcommand{\G}{\mathcal{G}}
\newcommand{\cH}{\mathcal{H}}
\newcommand{\gH}{\lsup{g}{\mathcal{H}}}
\newcommand{\tH}{\widetilde{\mathcal{H}}}
\newcommand{\tHn}{\tH^{\mathrm{n}}}
\newcommand{\tHr}{\tH^{\mathrm{r}}}
\newcommand{\tHs}{\tH^{\mathrm{s}}}
\newcommand{\tHsf}{\tHs_{\mathrm{f}}}
\newcommand{\GH}{\lsup{G}{\cH}}
\newcommand{\GtH}{\lsup{G}{\tH}}
\newcommand{\cS}{\mathcal{S}}
\newcommand{\tS}{\widetilde{\mathcal{S}}}
\newcommand{\Ltwon}[1]{\left\|#1\right\|}
\newcommand{\NN}{\mathbb{N}}
\newcommand{\R}{\mathbb{R}}
\newcommand{\bS}{\mathbf{S}}
\newcommand{\fS}{\mathfrak{S}}
\newcommand{\fT}{\mathfrak{T}}
\newcommand{\tT}{\widetilde{T}}
\newcommand{\U}{\mathcal{U}}
\newcommand{\lsup}[2]{\prescript{#1}{}{#2}}
\newcommand{\xr}{x_{\mathrm{r}}}
\newcommand{\xs}{x_{\mathrm{s}}}
\begin{document}
\title[Model Theory and the mean ergodic theorem]
{Model Theory and the mean ergodic theorem \\ for abelian unitary actions}
\author{Eduardo Due\~nez \and Jos\'e Iovino}
\address{The University of Texas at San Antonio\\
  San Antonio, TX 78249.} \email{\texttt{eduenez@utsa.edu}, \texttt{jose.iovino@utsa.edu}}
  
  \begin{abstract}
  We explore connections between von Neumann's mean ergodic theorem and concepts of model theory. 
As an application we present a proof Wiener's generalization of von Neumann's result in which the group acting on the Hilbert space~$\cH$ is any abelian group of unitary transformations of~$\cH$.
  \end{abstract}
  
\subjclass[2010]{Primary 03C98; Secondary 37A30, 03C20}
\keywords{Model theory, types over Banach spaces, mean ergodic theorem}

\maketitle

\section{Introduction}
\label{sec:introduction}
The discrete-time version of von Neumann's mean ergodic theorem~\cite{Neumann1932} states:
\begin{vN}
  If $T$ is a unitary operator on a Hilbert space~$\cH$,
  and~$x\in\cH$, then the sequence $\{\AV_n(x)\}$ of ``ergodic
  averages''
  \begin{equation*}
    \AV_n(x) = \frac{1}{n}\sum_{k=1}^n T^n(x)
  \end{equation*}
  converges in norm, as $n\to\infty$, to the orthogonal
  projection~$\pi(x)$ of~$x$ on the (closed) subspace
  \begin{equation*}
    \lsup{T}\cH = \{ x\in\cH \mid T(x) = x \}
  \end{equation*}
  consisting of $T$-fixed elements.
\end{vN}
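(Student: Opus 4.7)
The plan is to follow the classical Hilbert-space decomposition strategy, splitting $\cH$ as the orthogonal sum of the fixed subspace $\lsup{T}\cH$ and its orthogonal complement, and then verifying convergence of the averages $\AV_n$ on each piece separately. On $\lsup{T}\cH$ the claim is trivial, since $T^k(x)=x$ for all $k$, whence $\AV_n(x)=x=\pi(x)$. The remaining work is therefore to analyze $\AV_n$ on $(\lsup{T}\cH)^{\perp}$ and show that $\AV_n(x)\to 0$ there.

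The decisive step, and the one I expect to require the most care, is the identification
\begin{equation*}
(\lsup{T}\cH)^{\perp} \;=\; \overline{\img(T-I)}.
\end{equation*}
One inclusion is elementary: if $x=Ty-y$ and $Tz=z$, then $\langle x,z\rangle=\langle Ty,z\rangle-\langle y,z\rangle=\langle y,T^{*}z\rangle-\langle y,z\rangle=0$, using $T^{*}z=z$, which follows from $T$ being unitary (so $T^{*}=T^{-1}$ and $Tz=z$ forces $T^{*}z=z$). For the reverse inclusion, if $z\perp\img(T-I)$ then $\langle Ty-y,z\rangle=0$ for all $y\in\cH$, i.e.\ $\langle y,T^{*}z-z\rangle=0$ for all $y$, hence $T^{*}z=z$; inverting again via unitarity gives $Tz=z$, so $z\in\lsup{T}\cH$. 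Taking closures yields the required identification. This is the main obstacle: without unitarity (or some weaker symmetry substitute) the equality fails, and in the generalization to abelian group actions the analogous closed-range description will have to be phrased more carefully.

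With the decomposition in hand, convergence on $(\lsup{T}\cH)^{\perp}$ is established by the standard telescoping trick on $\img(T-I)$: for $x=Ty-y$,
\begin{equation*}
\AV_n(x)=\frac{1}{n}\sum_{k=1}^{n}T^{k}(Ty-y)=\frac{1}{n}\bigl(T^{n+1}y-Ty\bigr),
\end{equation*}
so $\|\AV_n(x)\|\le 2\|y\|/n\to 0$. Because $T$ is unitary, each $T^k$ is an isometry, hence $\|\AV_n\|\le 1$ uniformly in $n$, and a standard $\varepsilon/3$-argument extends the convergence $\AV_n(x)\to 0$ from $\img(T-I)$ to its closure $\overline{\img(T-I)}=(\lsup{T}\cH)^{\perp}$.

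Finally, writing an arbitrary $x\in\cH$ as $x=\pi(x)+x'$ with $\pi(x)\in\lsup{T}\cH$ and $x'\in(\lsup{T}\cH)^{\perp}$, linearity of $\AV_n$ gives $\AV_n(x)=\pi(x)+\AV_n(x')\to\pi(x)$, completing the proof. The whole argument hinges on the two structural uses of unitarity: the automatic bound $\|\AV_n\|\le 1$, and the symmetry $\ker(T-I)=\ker(T^{*}-I)$ that drives the orthogonal decomposition; any model-theoretic reformulation presumably aims to replace these ad hoc ingredients with something more canonical, which is presumably why the paper recasts the argument in the language of types.
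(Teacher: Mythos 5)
Your proof is correct, but it is not the paper's argument: you have reconstructed Riesz's classical proof, which the authors cite in their introduction precisely as the short, elementary route they are deliberately not taking. (You also silently corrected the typo $T^n$ for $T^k$ in the displayed average, which is the intended reading.) The paper never proves the single-operator statement directly; it proves Wiener's generalization (Theorem~\ref{thm:MET}), of which von Neumann's theorem is the case $G=\Z$, $T_g=T^g$, \Folner\ sequence $\G_n=\{1,\dots,n\}$, and it does so model-theoretically, following Tao's nonstandard outline: the averages are encoded as ultralimit types $\Xi^{\U}=\Ulim_n\tp_{\cS}(\AV_n)$ realized by ``inner'' operators $A$ in a saturated, strongly homogeneous extension $\tS$, and the working decomposition is not yours but Tao's pseudorandom/structured splitting $\tH=\tHr\oplus\tHs$, where $\tHr=\bigcap_{A}\ker A$ is shown null (Lemma~\ref{lem:r-implies-n}, using homogeneity to move a realization of $\Sigma^{\U}_x$ onto a pair $(A',x)$) and $\tHs$, the closed span of the images $\img(A^{*})$, is shown to be exactly the fixed space $\GtH$ (Lemmas~\ref{lem:G-fixed-disjoint-null} and~\ref{lem:s-implies-G}). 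Your two key ingredients survive in translated form: the telescoping identity $\frac1n\sum_{k=1}^n T^k(Ty-y)=\frac1n\bigl(T^{n+1}y-Ty\bigr)$ is exactly the $G=\Z$ instance of the \Folner\ estimate $\Ltwon{T_g\circ\AV_n-\AV_n}\le e(g,n)\to 0$ of Lemma~\ref{lem:G-invar}, and your identification $(\lsup{T}\cH)^{\perp}=\overline{\img(T-I)}$ reappears at the very end of the paper's proof, where $\img(\tT_{g^{-1}}-I)=\ker(\tT_g-I)^{\perp}$ is used to show that $\GtH$ and its complement are determined by types over~$\cS$, so that the external projection $\Pi$ restricts to $\pi$ on~$\cH$. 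As to what each approach buys: yours is far shorter, and in fact generalizes to the abelian case if you replace telescoping by the \Folner\ estimate and $\overline{\img(T-I)}$ by the closed span of the $\img(T_g-I)$; but it leans on knowing the candidate limit (the projection) in advance. The paper's machinery constructs the limit as a realized type without guessing it, which --- as its concluding remarks emphasize --- is what lets the method scale to settings like Walsh's theorem, where no explicit description of the limiting operator is available.
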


Many different proofs of this important result and its generalizations appeared in the decades after. 
Some of the proofs, such as Riesz's~\cite{Riesz1941}, are short and fairly elementary, and cannot easily be surpassed in those qualities.
More recently, Tao~\cite{Tao2012} outlined a short proof relying on nonstandard analysis that has at least two virtues. 
First, Tao's argument follows a very natural route, at least within the realm of nonstandard analytical ideas \emph{\`a la} Robinson~\cite{Robinson1974}. 
Second, Tao's argument can be adapted to prove the existence of ergodic averages of generalized (``polynomial'') group actions, eventually leading to a fairly natural nonstandard proof of Walsh's theorem~\cite{Walsh2012} (for which Walsh relied on standard methods).
(Tao himself outlines the nonstandard argument needed to achieve this.)

In this paper, we explore concepts from model theory underlying Tao's argument and, as an application, we prove Wiener's generalization~\cite{Wiener1939} of von Neumann's result in which the group acting on~$\cH$ is an abelian group of unitary transformations of~$\cH$ (rather than merely the cyclic group generated by a single transformation~$T$).
Ergodic averages are defined relative to a specific \Folner\ sequence for the group.
For the definition of \Folner\ sequence and of the ergodic averages, and a precise statement of this theorem, the reader is referred to section~\ref{sec:set-up} below. 
(We note that Wiener's result did not directly used \Folner\ sequences. 
Wiener assumed the group to be finitely generated, and a fixed set of generators was used to define the sequence of ergodic averages.)

Our approach is philosophically very close to Tao's, but instead of classical nonstandard analysis we use the more modern approach of model theory of Banach space structures and of types over Banach spaces originally developed by Henson and the second author~\cite{Iovino:1999a, Iovino:1999b,Henson-Iovino:2002,Iovino2014}. 
On the other hand, this manuscript is essentially self-contained; in particular, no prior knowledge of model theory is assumed of the reader. 
The only prerequisite in logic is familiarity with the concepts of structure, formula, and satisfaction ($\models$).

An advantage of the model-theoretic approach taken here is the fact that we do not need to deal with nonstandard extensions of $\mathbb{R}$, or with internal/external sets. 
Our structures are based on genuine Banach spaces and $C^*$-algebras.

 In order to study Banach spaces from a perspective of model theory one may use either Henson's logic of approximations (see \cite{Henson-Iovino:2002}), or the equivalent framework of first-order continuous logic developed by Ben Yaacov and Usvyatsov (see~\cite{Ben-Yaacov-Berenstein-Henson-Usvyatsov:2008,Ben-Yaacov-Usvyatsov:2010}). 
We have chosen the former, as we feel that it is more adequate for the context at hand.

The crucial tool in the paper is the concept of type over a Banach space structure. 
Types over discrete structures have been central in model theory since the early days, but types over Banach spaces were introduced much later, by Krivine, in his epochal paper on finite representability of $\ell_p$ in Banach lattices~\cite{Krivine:1976}. 
Krivine's types played a preeminent role in the Krivine-Maurey proof that every Banach stable space contains some $\ell_p$ almost isometrically~\cite{Krivine-Maurey:1981}.

The power of types in analysis lies in the fact that they allow one to view functional limits taken over some structure (e.g., the limit that appears in the statement of the mean ergodic theorem) as elements of some special extension of the structure. 
By results proved by Henson and the second author~\cite{Henson-Iovino:2002} (following Shelah~\cite{Shelah:1971}), the special extension, let's call it $\mathfrak C$, can be taken in such a way that there is a univocal correspondence between types and automorphisms of $\mathfrak{C}$. 
This yields a Galois theory for types, and hence for functional limits. 
See section~\ref{sec:extensions-compactness} for the details.

From the point of view of logic, the types introduced by Krivine in Banach space theory correspond to types of quantifier-free formulas. 
Full types were introduced later by the second author~\cite{Iovino:1999a, Iovino:1999b} relying on Henson's aforementioned logical formalism. 
For most applications of model theoretic ideas to Banach space structures in the current literature, quantifier-free types suffice. 
 Here we make use of the quantifier information carried by full types.

Sections~\ref{sec:language}--\ref{sec:extensions-compactness} amount to a crash course on the aspects of the theory of types over Banach structures that are needed for our purposes. 
Readers already familiar with Banach space model theory may skip these sections and jump from section~\ref{sec:set-up}, where the main players are introduced,  to the proof of the theorem, which is given in section~\ref{sec:proof}.

\section{Statement of the theorem}
\label{sec:set-up}

\subsection{Notation}
\label{sec:notation}

Throughout the paper, $G$ shall denote a discrete, but not necessarily countable abelian group with operation $(g,h)\mapsto gh$ and identity~$1$. 
Since $G$ is abelian, it is amenable (see~\cite{Pier1984}), hence $G$ has a \Folner\ sequence, i.e.,  a sequence $\{\G_n\mid n\in\NN\}$ of nonempty finite subsets of~$G$ such that, for every $g\in G$, $\#(g\G_n \triangle \G_n)/\#\G_n \to 0$ as $n\to\infty$.
(Here, $A\triangle B$ denotes the symmetric difference of the sets~$A,B$.)
In what follows, $G$ and one specific \Folner\ sequence $\{\G_n\}$ will be fixed.

We will be dealing with structures of the form
\[
(\R,\cH,\mathcal{B},\{T_g\}_{g\in \G},\{a_j\}_{j\in J}),
\]
where
\begin{itemize}
\item $\R$ is the field of real numbers. 
\item $\cH$ is a real Hilbert space with inner product $\left\langle \cdot,\cdot \right\rangle$ and norm $\Ltwon{x} = \sqrt{\left\langle x,x \right\rangle}$.
\item $\mathcal{B}$ is the real $C^{*}$-algebra of bounded operators~$\cH\to\cH$ endowed with the adjunction $B\mapsto B^{*}$, the evaluation map $\ev : \mathcal{B}\times\cH \to \cH: (B,x)\mapsto B(x)$, the composition map $(B_1,B_2)\mapsto B_1\circ B_2$ (the algebra product), and the operator norm $\Ltwon{B} = \sup_{x\in\cH} \Ltwon{B(x)}$.
\item $\{T_g \mid g\in G\}$ is a fixed unitary representation of~$G$ in~$\cH$: $T_1 = I$, $T_gT_h = T_{gh} = T_{hg} = T_hT_g$ and $T_g T^{*}_g = I = T^{*}_gT_g$ for all $g,h\in G$. 
\item For each $j\in J$, $a_j$ is an element of one of the sets $\mathbb{R}$, or $\cH$, or $\mathcal B$. 
\end{itemize}
Each of $\R, \cH, \cB$ will be called a \emph{sort} of the structure:
$\R$ is the \emph{real} sort,
$\cH$ the \emph{Hilbert} (or \emph{vector}) sort, and $\mathcal{B}$ the \emph{operator} sort. 
An \emph{element} of the preceding structure is an element of $\R\cup\cH\cup\cB$. 
The distinguished elements $a_j$ are called the \emph{constants} of the structure.

If $\G$ is any nonempty finite subset of~$G$, define the averaging operator
\begin{equation*}
  \AV_{\G} = \frac{1}{\#\G}\sum_{g\in \G} T_g \in \mathcal{B},
\end{equation*}
and let $\AV_n \coloneqq \AV_{\G_n}$.

\subsection{The Mean Ergodic Theorem}
\label{sec:MET}

\begin{theorem}\label{thm:MET}
  Let $G$ be an abelian group with a fixed \Folner\ sequence~$\{\G_n \mid n\in\NN\}$, $\cH$ a Hilbert space, and $\{T_g \mid g\in G\}$ a unitary representation of~$G$ on~$\cH$.

Let $\gH = \ker(T_g-I) = \{u\in \cH \mid T_g(u) = u\}$ and $\GH = \bigcap_{g\in G}\gH = \{u\in \cH \mid \text{$T_g(u)=u$ for all $g\in G$}\}$.
Let $\pi : \cH \to \GH$ be the orthogonal projection.

Then we have $\AV_n(x) \to \pi(x)$ as $n\to\infty$ for all $x\in \cH$.
\end{theorem}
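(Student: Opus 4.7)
My plan is to use the extension $\hat{\mathfrak{C}}$ from Section~\ref{sec:extensions-compactness} to reduce the convergence question to a single type-realization computation inside the Hilbert sort; the only genuinely Hilbert-space input will be the classical identification of $\GH^\perp$ with the norm-closed linear span of $\{T_g u - u \mid g \in G,\, u \in \cH\}$.

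First I would reduce to proving that $\AV_n(y)\to 0$ in norm for every $y\in\GH^\perp$. Each $T_g$ is unitary, so $\GH$ and $\GH^\perp$ are both $T_g$-invariant, hence $\AV_n$-invariant, and $\AV_n$ acts as the identity on $\GH$; therefore $\AV_n(x) - \pi(x) = \AV_n(x-\pi(x))$ with $x-\pi(x)\in\GH^\perp$.

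Next, for $y\in\GH^\perp$ and an arbitrary ultrafilter $\U$ on $\NN$, I would use the compactness of the space of types to realize in $\hat{\mathfrak{C}}$ the element $\bar z \coloneqq \Ulim_n\AV_n^*\AV_n(y)$. Two properties of $\bar z$ should do the job. For the first, $T_g$ commutes with $\AV_n^*$, so
\[
\|T_g\AV_n^*\AV_n(y) - \AV_n^*\AV_n(y)\| = \|\AV_n^*(T_g\AV_n - \AV_n)(y)\| \le \frac{\#(g\G_n\triangle\G_n)}{\#\G_n}\,\|y\| \longrightarrow 0,
\]
and since each $T_g$ is a named constant, this estimate transfers to the limit type, yielding $T_g\bar z = \bar z$ in $\hat{\mathfrak{C}}$. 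For the second, $\AV_n^*\AV_n(u)=u$ for every $u\in\GH$, so
\[
\langle\AV_n^*\AV_n(y), u\rangle = \langle y,\AV_n^*\AV_n(u)\rangle = \langle y,u\rangle = 0,
\]
which passes to the limit as $\langle\bar z,u\rangle = 0$ for every $u\in\GH$.

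Finally I would invoke the key lemma: any $\bar w\in\hat{\mathfrak{C}}$ that is $G$-fixed and orthogonal to every element of the original $\GH$ is automatically orthogonal to all of~$\cH$. The reason is that $G$-invariance forces $\langle T_g u - u,\bar w\rangle = \langle u, T_{g^{-1}}\bar w - \bar w\rangle = 0$ for every $u\in\cH$ and $g\in G$, so $\bar w$ annihilates the closed span of $\{T_g u - u\}$, which is $\GH^\perp$; combined with $\bar w\perp\GH$ this forces $\bar w\perp\cH$. Applied to $\bar z$, the lemma yields $\langle y,\bar z\rangle=0$. Since $\|\AV_n(y)\|^2 = \langle y, \AV_n^*\AV_n(y)\rangle$ in the original $\cH$, this gives $\Ulim_n\|\AV_n(y)\|^2 = \langle y,\bar z\rangle = 0$; as $\U$ was arbitrary and $\{\|\AV_n(y)\|^2\}$ is bounded, $\|\AV_n(y)\|\to 0$, as required. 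The step I expect to require the most care is the transfer from the Følner estimate to the identity $T_g\bar z = \bar z$ in $\hat{\mathfrak{C}}$: this is precisely where the type-theoretic setup earns its keep, since "$T_g v = v$" is a first-order condition in the language in which each $T_g$ is named as a constant.
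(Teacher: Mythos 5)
Your proposal is correct, and it takes a genuinely different route from the paper's. The paper argues at the level of \emph{operator} types: it realizes $\Xi^{\U} = \Ulim_n \tp_{\cS}(\AV_n)$ by elements $A$ of the operator sort $\tB$ of the saturated extension $\tS$ (this is what you call $\hat{\mathfrak{C}}$), proves $G$-invariance of any such $A$ from the \Folner\ estimate (lemma~\ref{lem:G-invar}, corollary~\ref{cor:G-invar}), decomposes $\tH$ into a pseudorandom part $\tHr = \bigcap_A \ker A$ and a structured part $\tHs = (\tHr)^{\perp}$, shows pseudorandom implies null via a strong-homogeneity (automorphism) argument (lemma~\ref{lem:r-implies-n}), identifies $\tHs = \GtH$, and only at the very end descends from $\tH$ to $\cH$ by a definability-of-types argument. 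You instead perform the orthogonal reduction $x \mapsto x - \pi(x)$ inside $\cH$ at the outset and, for $y \in \GH^{\perp}$, realize a single \emph{vector}-sort ultralimit type $\Ulim_n \tp_{\cS}(\AV_n^{*}\AV_n(y))$ by some $\bar z \in \tH$ (any realization serves; uniqueness is never needed). Your transfer steps are sound and are exactly the mechanism of lemma~\ref{lem:G-invar}: the conditions $\Ltwon{\mathtt{T}_g(\mathtt{w}) - \mathtt{w}} \le \varepsilon$ and $\abs{\langle \mathtt{w}, \mathtt{u}\rangle} \le \varepsilon$ (with $u, y \in \cS$ named constants) are quantifier-free with parameters in $\cS$, so membership in $\tp_{\cS}(\AV_n^{*}\AV_n(y))$ for $\U$-most $n$ places them in the ultralimit type, and realizations satisfy them exactly; likewise the adjoint identity $\langle \tT_g u, v\rangle = \langle u, \tT_{g^{-1}} v\rangle$ on $\tH$ transfers, as the paper notes when identifying $\tB$ with a $C^{*}$-algebra of genuine bounded operators on $\tH$. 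Your key lemma (a $G$-fixed $\bar w \in \tH$ orthogonal to $\GH$ is orthogonal to all of $\cH$, via the classical identity $\GH^{\perp} = \overline{\mathrm{span}}\{T_g u - u \mid g\in G,\ u\in\cH\}$) then forces $\Ulim_n \Ltwon{\AV_n(y)}^2 = \Ulim_n \langle y, \AV_n^{*}\AV_n(y)\rangle = \langle y, \bar z\rangle = 0$ for every nonprincipal $\U$, which, as in lemma~\ref{lem:nullity}, gives norm convergence. What your route buys: only saturation is used (one type realized per $y$ and $\U$); strong homogeneity, the operator-sort realizations, the $\tHr \oplus \tHs$ decomposition of $\tH$, and the closing definability transfer all disappear --- in effect you have transplanted the classical weak-compactness proof, with realization of ultralimit types standing in for weak limits, and the energy identity $\Ltwon{\AV_n(y)}^2 = \langle y, \AV_n^{*}\AV_n(y)\rangle$ doing the duality work. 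What the paper's route buys: the structured/pseudorandom dichotomy inside $\tH$ is precisely the scaffolding that generalizes (following Tao, toward Walsh-type results) to situations with no a priori description of the limiting projection, whereas your argument is powered by knowing in advance that the limit must be the projection onto $\GH$ --- exactly the kind of foreknowledge the paper's section~\ref{sec:conclusion} warns is unavailable in more general settings.
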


\section{A logical language, discrete truth, and approximate truth}
\label{sec:language}

Fix a structure
\[
\cS=(\R,\cH,\mathcal{B},\{T_g\}_{g\in \G},\{a_i\}_{i\in I})
\]
as given in Section~\ref{sec:notation}.
We need a syntactic language $L$ to express logical statements about the structure~$\cS$ by regarding $\cS$ as an $L$-structure. 
Variables are needed: $\mathtt{t}_1, \mathtt{t}_2, \dots$ for the real sort, $\mathtt{x}_1, \mathtt{x}_2, \dots$ for the vector sort,  
$\mathtt{B}_1, \mathtt{B}_2, \dots$ for the operator sort.
We use the function symbol $\Ltwon{\cdot}$ for the Hilbert and operator norms (abusing notation, also for the real absolute value when needed), and $\left\langle x,y \right\rangle$ as an abbreviation for $\frac{1}{4}(\Ltwon{x+y}-\Ltwon{x-y})$, for any terms $x,y$ of the vector sort.
Our language $L$ also includes function symbols $+,\cdot,{}^*,\circ,\mathtt{ev},\mathtt{T}_g$, respectively, for the operations of addition (of reals, vector and operators), scalar multiplication, adjunction, composition, the evaluation~$\ev:\cB\times\cH\to\cH$, and the unitary transformations~$T_g$, $g\in G$. 
Finally, $L$ must include a constant symbol $\mathtt{a}_i$  for each constant element $a_i$; as is the case with variables, each constant symbol must be associated with a specific sort.

For syntactic simplicity, we will rarely if ever use the symbol for evaluation explicitly. 
Thus, we regard $\mathtt{B}(\mathtt{x})$ as an alias for $\mathtt{ev}(\mathtt{B},\mathtt{x})$.

If $C$ is a set of constant symbols each of which comes associated with one of the three given sorts, we shall denote by $L[C]$ the language that results from expanding $L$ with the constants in $C$. 
If $\cS$ is a structure, $L[\cS]$ will denote the expansion of $L$ that results from adding a constant symbol (of the adequate sort) for each element of~$\cS$. 

Now we define the class of $L$-formulas. 
The terms of $L$ are obtained applying the usual rules, that is, starting with variables and constant symbols along with function symbols in a manner consistent with semantic interpretations of $L$-formulas in~$\cS$ (or some other structure of the same kind).
The atomic formulas of $L$ are all the inequalities of the forms $s \le t$ (or $t \ge s$ if preferred) for any real terms $s,t$.
General formulas are obtained inductively by using connectives and quantifiers. 
However there are two restrictions: the only connectives allowed are the positive boolean connectives $\land$ (conjunction) and $\lor$ (disjunction), and in place of the traditional first-order quantifiers $\exists,\forall$, we have the \emph{bounded quantifiers} i.e.,  for each $r\in\R$, $r>0$ we have the bounded existential quantifier $\exists^r$ and the bounded universal quantifier $\forall^r$. 
The resulting class of formulas is called the class of \emph{positive bounded $L$-formulas}. 

The satisfaction relation $\cS\models\varphi[a_1,\dots,a_n]$ where $\cS$ is an $L$-structure (of the kind defined in this section) and $\varphi$ is positive bounded $L$-formula, is defined  as in traditional first-order logic, by giving the symbols of the signature and the connectives $\land,\lor$ their natural interpretation in $\cS$, but allowing the quantifiers  $\forall^r,\exists^r$ to range only over elements of $\cS$ of norm at most $r$, that is:
\begin{itemize}
\item $\cS\models \exists^r \mathtt{b}\, \varphi$ if and only if $\cS\models\varphi[b,a_1,\dots,a_n]$ for some element $b$ of $\cS$ of norm at most $r$ belonging to the same sort as the variable~$\mathtt{b}$,
\item $\cS\models \forall^r \mathtt{b}\, \varphi$ if and only if $\cS\models\varphi[b,a_1,\dots,a_n]$ for every element $b$ of $\cS$ of norm at most $r$ belonging to the same sort as the variable~$\mathtt{b}$.

\end{itemize}

If $t$ and $s$ are terms, we write 
$t = s$ as a purely syntactic equivalent for $-s \le t \wedge t\le s$.

We stress the fact that the language $L$ does not include a symbol for equality nor, more importantly, a symbol for negation. 
In particular, there is no general manner to formulate a statement such as ``$P$ implies $Q$'' (i.e., ``$Q$ or not $P$'') in the language~$L$.

We now define a relation of \emph{approximation} between formulas $\varphi,\varphi'$ in the language~$L$, denoted $\varphi'<\varphi'$ (\emph{$\varphi'$ approximates $\varphi$}, or \emph{$\varphi$ is approximated by~$\varphi'$}), as follows:
\begin{itemize}
\item $x \ge y$ is approximated by $x+\varepsilon \ge y$ for whatever real $\varepsilon>0$;
\item $x \le y$ is approximated by $x \le y+\varepsilon$ for whatever real $\varepsilon>0$;
\item $\varphi\wedge\psi$ is approximated by $\varphi'\wedge\psi'$ whenever $\varphi'<\varphi$ and $\psi'<\psi$;
\item $\varphi\vee\psi$ is approximated by $\varphi'\vee\psi'$ whenever $\varphi'<\varphi$ and $\psi'<\psi$;
\item $\exists^r \mathtt{a} \,\varphi$ is approximated by $\exists^s \mathtt{a}\,\varphi'$ whenever $s>r$ and $\varphi'<\varphi$;
\item $\forall^r \mathtt{a} \,\varphi$ is approximated by $\forall^s \mathtt{a}\,\varphi'$ whenever $s<r$ and $\varphi'<\varphi$.
\end{itemize}
It is immediate that $<$ is a strict partial order.

A weaker notion of truth of sentences in the (same) language~$L$ is obtained by defining $\cS \appmod \varphi$ to mean that $\cS\models \varphi'$ whenever $\varphi'<\varphi$. 
If  $\varphi$ is a sentence (i.e., a formula without free variables) and $\cS\models\varphi$ we say that $\cS$ satisfies $\varphi$ \emph{exactly} and if $\cS\appmod\phi$ we say that $\cS$ satisfies $\varphi$ \emph{approximately}.

The \emph{complete theory} (or simply ``theory'') of an $L$-structure $\cS$ is the set of all $L$-sentences approximately satisfied by $\cS$.

It is clear from the definition of the approximation relation that $\cS\models\varphi$ implies $\cS\appmod\varphi$.
The converse is true if the sentence $\varphi$ includes only universal quantifiers (in particular, if $\varphi$ is quantifier-free), but not for general sentences~$\varphi$.

\section{Types}
\label{sec:types}

The \emph{weak negation} $\Neg(\varphi)$ of any formula $\varphi$ in $L$ is defined recursively as follows:
\begin{itemize}
\item The weak negation of $x \le y$ is $x \ge y$;
\item The weak negation of $x \ge y$ is $x \le y$;
\item The weak negation of $\varphi\wedge\psi$ is $\Neg(\varphi)\vee\Neg(\psi)$;
\item The weak negation of $\varphi\vee\psi$ is $\Neg(\varphi)\wedge\Neg(\psi)$;
\item The weak negation of $\forall^r x\,\varphi$ is $\exists^r x\,\Neg(\varphi)$;
\item The weak negation of $\exists^r x\,\varphi$ is $\forall^r x\,\Neg(\varphi)$.
\end{itemize}
Weak negation transposes the relation of weak approximation: $\varphi'<\varphi$ if and only if $\Neg(\varphi)<\Neg(\varphi')$.
Clearly, $\Neg(\Neg(\varphi))$ is $\varphi$.
 
It is possible for a formula and its weak negation to both be exactly satisfied: $0_{\cH}$ satisfies $\Ltwon{\mathtt{0}_{\cH}}\ge \mathtt{0}$ and $\Ltwon{\mathtt{0}_{\cH}} \le \mathtt{0}$.
On the other hand, it is easy to see that a formula and the weak negation of any of its approximations cannot both be satisfied.

Recall that $L[\cS]$ includes one constant symbol $\mathtt{a}$ denoting each element $a$ of every sort of~$\cS$.
Then $\bS$ is an approximate elementary extension of $\cS$ (denoted $\cS \precsim \bS$) provided
\begin{equation*}
\cS\appmod \varphi\qquad\text{if and only if}\qquad \bS \appmod \varphi
\end{equation*}
for all $L[\cS]$-sentences~$\varphi$. 
This means that approximate truth of $L[\cS]$-formulas ``transfers'' between~$\cS$ and $\bS$.
By interpreting the constant symbols of $L[\cS]$ in the overstructure~$\bS$ one obtains injections of the vector and operator sorts of~$\cS$ into those of~$\bS$ that may be regarded as set-theoretical inclusions. 

We regard the formula $\iota : \Ltwon{\mathtt{b}-\mathtt{a}}\le 0$ as defining equality.
Explicitly, two elements $u,v$ of a sort are the same (for all analytic purposes) provided $\iota[u,v]$ is satisfied.
(For a quantifier-free formula such as $\iota$, approximate satisfaction is equivalent to exact satisfaction.)

We shall use the notation $\overline{a}$ to denote an arbitrary (possibly empty) list $a_1, \dots, a_k$ of elements of any sorts of some $L$-structure. 
We call $\overline{a}$ an \emph{$X$-tuple} if $a_1,\dots,a_k$ all belong to some subset~$X$ (of the $L$-structure).
Similarly,  $\overline{\mathtt{z}} = \mathtt{z}_1,\dots,\mathtt{z}_k$ denotes a list of variables (bound to whatever sorts).
We say that $\overline{a}$ and $\overline{\mathtt{z}}$ are \emph{sort-compatible} if $a_i$ belongs to the sort to which the variable~$z_i$ is bound for $i=1,\dots,k$.

Let $X$ be any subset of an $L$-structure~$\cS$.
Denote by $L[X]$ the language $L$ expanded with distinct constant symbols naming all elements of~$X$. A \emph{type over~$X$}, relative to (the theory of)~$\cS$, is a set $\Xi$ of $L[X]$-formulas satisfying the following properties:
\begin{enumerate}
\item\emph{$\Xi$ is over~$X$:} There shall be a fixed-length list $\overline{\mathtt{z}}$ of variables such that $\Xi$ contains only $L[X]$-formulas of the form
 $\varphi(\overline{\mathtt{z}})$ (i.e., the free variables of any and all $\varphi\in\Xi$ belong to the fixed list $\overline{\mathtt{z}}$), in which case we may write $\Xi(\overline{\mathtt{z}})$ instead of~$\Xi$ for emphasis;
\item\emph{$\Xi$ is norm-bounded:} For each variable $\mathtt{w}$ in $\overline{\mathtt{z}}$ there exists $r\in\R$ such that the formula $\Ltwon{\mathtt{w}} \le \mathtt{r}$ belongs to~$\Xi$;
\item\emph{$\Xi$ is relative to (the theory of)~$\cS$:} Every finite subset of $\Xi_{+}$ is (exactly) satisfied in~$\cS$;
\item\emph{$\Xi$ is complete:} For every $\varphi$ we have: either $\varphi\in\Xi$, or else $\Neg(\varphi')\in\Xi$ for some $\varphi'<\varphi$.
\end{enumerate}

For any $\cS$-tuple $\overline{b}$ of elements in any sorts, the \emph{type of $\overline{b}$ over~$X$} (relative to~$\cS$) is the set
\begin{equation*}
  \begin{split}
    \tp_X(\overline{b}) &=
    \{\varphi(\overline{\mathtt{z}}) \in L[X] :
    \cS \appmod \varphi[\overline{b}]\} \\
    &= \{\psi(\overline{\mathtt{z}},\overline{\mathtt{a}}) \in L[X] :
    \text{$\psi(\overline{\mathtt{z}},\overline{\mathtt{w}})\in L$, $\overline{a}$ is an $X$-tuple and $\cS\appmod
      \psi[\overline{b},\overline{a}]$\}}     
  \end{split}
\end{equation*}
(where $\overline{\mathtt{a}}$ is the tuple of constant symbols naming the elements of the $X$-tuple $\overline{a}$).
Note that $\tp_X(\overline{b})$ actually depends on (the $\approx$-theory) of~$\cS$, although our notation hides this implicit dependence.
It is easy to show that $\tp_X(\overline{b})$ is a type over~$X$ (relative to~$\cS$) per the definition above. 
By  the compactness theorem for the logic of approximate satisfaction (see~\cite{Henson-Iovino:2002}), the converse is true, that is, every type $\Xi$ over~$X$ relative to~$\cS$ is the type of some $\overline{b}$ relative to a suitable $\approx$-extension $\fS\succsim \cS$. 
We say that $\overline{b}$ \emph{realizes} the type~$\Xi$ in~$\fS$.

\section{The logic topology on types}
\label{sec:topology}

Given any set~$\Xi$ of formulas, let $\Xi_+$ be the set consisting of every formula $\varphi'$ approximating some formula $\varphi$ of~$\Xi$.

Fix a tuple $\overline{\mathtt{z}}$ of variables, an $L$-structure~$\cS$, and a subset $R\subset\cS$.
The set $\fT = \fT_X(\overline{\mathtt{z}})$ of types $\Xi(\overline{\mathtt{z}})$ over~$X$ (relative to~$\cS$) admits the following topology (subsequently called the \emph{logic topology} on~$\fT$).
Given any $L[X]$-formula $\varphi$, let $[\varphi]$ be the set of types containing~$\varphi$.
A basis of neighborhoods of a given type~$\Xi$ consists of the sets $[\varphi]$ with $\varphi\in\Xi_+$ (i.e., $\varphi<\psi$ for some $\psi\in\Xi$).
It is easy to show that the logic topology is Hausdorff.

Define the norm $\Ltwon{\Xi}$ of a type $\Xi(\overline{\mathtt{z}})$ as the infimum of all real numbers $r\ge 0$ such that $\Xi$ includes the formulas $\Ltwon{\mathtt{a}} \le \mathtt{r}$ for each variable $\mathtt{a}$ of $\overline{\mathtt{z}}$.
By definition of type, $\Ltwon{\Xi}$ is finite.
The set $\fT^r$ of types of norm at most $r$ is compact in the logic topology~\cite{Henson-Iovino:2002,Iovino2014}.
Hence, any bounded set of types (relative to the norm $\Ltwon{\cdot}$) is relatively compact.

\section{Homogeneous extensions, saturated extensions, and compactness}
\label{sec:extensions-compactness}

Recall that the \emph{density} of a topological space $X$, denoted $\density(X)$, is the smallest cardinality of a dense subset of $X$. 
The density of a structure $\cS$, denoted $\density(\cS)$, is the sum of the densities of the sorts of $\cS$. 
The cardinality of a structure $\cS$, denoted $\card(\cS)$, is the sum of the cardinalities of the sorts of $\cS$. 

Let $\kappa$ be an infinite cardinal.

\begin{itemize}
\item
An $L$-structure $\cS$ is said to be \emph{$\kappa^+$-saturated} if any type (relative to the theory of~$\cS$) over a subset $X\subset\cS$ with $\density(X)\le\kappa$ is realized in~$\cS$.
\item
An $L$-structure $\cS$ is said to be \emph{strongly $\kappa^+$-homogeneous} if the following condition holds: if $C$ is a set of constants not in $L$ of cardinality at most $\kappa$ and if  $\{a_c\}_{c\in C}, \{b_c\}_{c\in C}$ are interpretations of the constants of $C$ in $\cS$ such that the structures $(\cS,a_c)_{c\in C}, (\cS,b_c)_{c\in C}$ approximately satisfy the same positive bounded $L[C]$-sentences, then there is an automorphism $f$ of the structure $\cS$ such that $f(a_c)=b_c$, for every $c\in C$.
\end{itemize}

For arbitrarily large $\kappa$, every structure $\cS$ has an approximate elementary extension $\tS$ such that $\tS$ is  $\kappa^+$-saturated and strongly  $\card(\cS)^+$-homogeneous; moreover, $\tS$ can be taken to be ultrapower of $\cS$. 
See~\cite{Henson-Iovino:2002}, Theorem 12.2 and Corollary 12.3.

Henceforth we fix an approximate elementary extension $\tS\succsim \cS$ such that $\tS$ is $\card(\cS)^+$-saturated and strongly $\density(\cS)^+$-homogeneous. 
Since $\cS\succsim\tS$, types over~$\cS$ relative to~$\cS$ are the same as those relative to~$\tS$.
The saturation condition means that any type (relative to the theory of~$\cS$) over a subset $X\subset\tS$ with $\density(X)\le\density(\cS)$ is realized in~$\tS$ itself. 
In particular, all possible types $\Xi_{\cS}(\overline{\mathtt{z}})$ over $\cS$ (for any and all variable tuples~$\overline{\mathtt{z}}$) are realized in~$\tS$.
The strong  $\card(\cS)^+$-homogeneity implies that for any two $\tS$-tuples $\overline{a}$, $\overline{b}$ with $\tp_{\cS}(\overline{a}) = \tp_{\cS}(\overline{b})$ there exists an automorphism of $\tS$ leaving $\cS$ fixed (pointwise).

The Hilbert and operator sorts of $\tS$ will be denoted $\tH$, $\tB$, respectively.
By a simple argument using saturation, $\tH$ is a Hilbert space; moreover, $\tB$ may be identified with a $C^*$-subalgebra of the set of all bounded endomorphisms of~$\tH$.
(We remark that one cannot expect to obtain all Hilbert-space endomorphisms of~$\tH$ through this identification. 
In the terminology of classical nonstandard analysis, operators so obtained are called ``inner'', the rest are ``outer''.)

Fix any variable tuple~$\overline{\mathtt{z}}$.
We shall presently exploit the compactness of $\fT^r = \fT^r_{\cS}(\overline{\mathtt{z}})$ (types over~$\cS$, of norm at most~$r$, relative to the theory of~$\cS$ or any $\approx$-elementary extension thereof).

Recall that $\beta\mathbb{N}\setminus \mathbb{N}$ denotes the set 
\begin{equation*}
\{\,\U\subset\NN \mid \text{$\U$ is a nonprincipal ultrafilter on~$\NN$}\,\}.
\end{equation*}
Given any sequence $\{\Xi_n \mid n\in\NN\}\subset\fT^r$ and any $\U\in\beta\mathbb{N}\setminus \mathbb{N}$, there exists a (unique) ultralimit type $\Xi^{\U} = \Ulim_n \Xi_n$ (in the logic topology).
By saturation, every type over~$\cS$ is realized in~$\tS$, so there exists some $\cS$-tuple $\overline{a}$ such that $\Xi^{\U} = \tp_{\cS}(\overline{a})$.
The tuple $\overline{a}$ need not be unique.
However, it is easy to see that $\overline{a}$ is unique (in the metric sense) if $\overline{a}$ is an $\cS$-tuple; indeed, in this case, $\tp_{\cS}(\overline{a})$ includes each of the formulas $\Ltwon{\mathtt{w} - \mathtt{b}}\le 0$ for each variable~$\mathtt{w}$ in $\overline{\mathtt{z}}$ and element $b$ in~$\overline{a}$.

\section{Proof of the mean ergodic theorem}
\label{sec:proof}
This section is devoted to the proof of theorem~\ref{thm:MET}.

Throughout the section, $\cS=(\R,\cH,\mathcal{B},\{T_g\}_{g\in \G}),$ is a fixed structure, and $\tS = (\R,\tH, \tB, \{\widetilde{T}_{g\in \G}\})$ will be a fixed a $\card(\cS)^+$-saturated, strongly $\density(\cS)^+$-homogeneous approximate elementary extension of $\cS$. 
All the types mentioned will be relative to the theory of $\cS$. 
We shall assume that our basic language $L$ includes a constant symbol  for each element of~$\cS$. 

Whenever $\Phi,\Psi$ are types over some common subset~$X$ of $\tS$, we shall write $\Phi\models\Psi$ to mean that every realization of $\Phi$ in $\tS$ is a realization of~$\Psi$.

Recall that $\mathtt{t}_1, \mathtt{t}_2, \dots$, $\mathtt{x}_1, \mathtt{x}_2, \dots$, and  $\mathtt{B}_1, \mathtt{B}_2, \dots$ are the variables for the real sort, the vector sort, and the operator sort, respectively. 
We shall define types in these variables. 
(For readability, we omit the subindices when the context allows it.)

Let $\AV_n = \frac{1}{C_n}\sum_{g\in\G_n} T_g$ be the averaging operator on~$\tS$, i.e., $\AV_n$ is the $\tS$-interpretation of the term $\mathtt{AV}_n \coloneqq \mathtt{C}_n^{-1}\big(\mathtt{T}_{g_1} + \ldots + \mathtt{T}_{g_{C_n}})$, where $g_1,\dots,g_{C_n}$ are the $C_n$ distinct elements of~$\G_n$.
Note that $\Ltwon{\AV_n} \le 1$ for all $n\in\NN$ since $\Ltwon{T_g} = 1$ for all $g\in G$.

For any $\U\in\beta\mathbb{N}\setminus \mathbb{N}$, and any $x\in\tH$, define the following types  over~$\cS$:
\begin{itemize}
\item $\Sigma^{\U}_x(\mathtt{B},\mathtt{x}) \coloneqq \Ulim_n\tp_{\cS}(\AV_n,x)$;
\item $\Upsilon^{\U}_x(\mathtt{y}) \coloneqq \Ulim_n\tp_{\cS}(\AV_n(x))$; and
\item $\Xi^{\U}(\mathtt{B}) \coloneqq \Ulim_n\tp_{\cS}(\AV_n)$.
\end{itemize}
(The above ultralimits exist by compacity of $\fT^r$ with $r = \max\{\Ltwon{x},1\}$.)

For any $n\in\NN$, we have $\tp_{\cS}(x) \models \tp_{\cS}(\AV_n,x)$.
In fact, for any $L[\cS]$-formula $\xi(\mathtt{B},\mathtt{x})$ we have $\xi(\mathtt{B},\mathtt{x})\in\tp_{\cS}(\AV_n,x)$ precisely when $\xi(\mathtt{AV}_n,\mathtt{x})\in\tp_{\cS}(x)$.
Next, $\tp_{\cS}(\AV_n,x)\models\tp_{\cS}(\AV_n(x))$ since $\xi(\mathtt{y}) \in \tp_{\cS}(\AV_n(x))$ precisely when $\xi(\mathtt{B}(\mathtt{x})) \in \tp_{\cS}(\mathtt{B},\mathtt{x})$. 
Note that $\tp_{\cS}(\AV_n)$ is determined completely by the theory of~$\cS$ alone.

Thus, for any fixed choice of~$\U\in\beta\mathbb{N}\setminus \mathbb{N}$, it is clear that $\tp_{\cS}(x) \models \Sigma^{\U}_x \models \Upsilon^{\U}_x$. 

For every real $\varepsilon\ge 0$, let $\varphi^{\varepsilon}(\mathtt{B},\mathtt{x})$ be the formula ``$\Ltwon{\mathtt{B}(\mathtt{x})}\le\varepsilon$'' and $\psi^{\varepsilon}(\mathtt{y})$ the formula ``$\Ltwon{\mathtt{y}}\le\varepsilon$''.
Call $x$ a \emph{$\U$-null-ergodic} element (or just \emph{$\U$-null}), if $\psi^0 \in \Upsilon^{\U}_x$.
Call $x$ \emph{null-ergodic} (or just \emph{null}) if $x$ is $\U$-null for all~$\U\in\beta\mathbb{N}\setminus \mathbb{N}$.
(Note that ``$\psi^0\in\Upsilon^{\U}_x$'' is equivalent to the statement that $\Upsilon^{\U}_x$ is $\tp_{\cS}(0_{\cH})$, the type over~$\cS$ of the zero vector.)

Since $\tp_{\cS}(x)\models \Upsilon^{\U}_x$ for all~$\U\in\beta\mathbb{N}\setminus \mathbb{N}$, the property ``$x$ is $\U$-null'' (resp., ``$x$ is null'') depends only on $\U$ and $\tp_{\cS}(x)$ (resp., only on $\tp_{\cS}(x)$).

\begin{lemma}\label{lem:nullity}
An element $x$ is null if and only if
  \begin{equation*}
    \text{$\Ltwon{\AV_n(x)} \to 0$\quad as $n\to\infty$.}
  \end{equation*}
\end{lemma}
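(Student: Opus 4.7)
The plan is to work one ultrafilter at a time: I would first show that, for each fixed $\U\in\beta\NN\setminus\NN$, the element $x$ is $\U$-null if and only if $\Ulim_n \Ltwon{\AV_n(x)} = 0$, and then invoke the elementary fact that a bounded sequence of reals tends to $0$ if and only if every nonprincipal ultralimit of it vanishes. Since $\Ltwon{\AV_n}\le 1$, the sequence $\Ltwon{\AV_n(x)}$ is bounded by $\Ltwon{x}$, so this last step is straightforward (forward is trivial; for the converse, a subsequence bounded away from $0$ generates a nonprincipal $\U$ with nonzero ultralimit).

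For the ultrafilter-by-ultrafilter equivalence, I would unpack the definition of the ultralimit type in the logic topology. Since $\psi^{\varepsilon}$ is quantifier-free, $\psi^{\varepsilon}\in\tp_{\cS}(\AV_n(x))$ is equivalent to $\Ltwon{\AV_n(x)}\le\varepsilon$; and the approximation relation gives $\psi^{\varepsilon}<\psi^0$ for every $\varepsilon>0$, because $\Ltwon{\mathtt{y}}\le 0$ is approximated by $\Ltwon{\mathtt{y}}\le\varepsilon$. If $\psi^0\in\Upsilon^{\U}_x$, then each basic neighborhood $[\psi^{\varepsilon}]$ of $\Upsilon^{\U}_x$ satisfies $\{n:\Ltwon{\AV_n(x)}\le\varepsilon\}\in\U$, so $\Ulim_n\Ltwon{\AV_n(x)}=0$. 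Conversely, if $\psi^0\notin\Upsilon^{\U}_x$, completeness of the type $\Upsilon^{\U}_x$ furnishes some $\varepsilon>0$ with $\Neg(\psi^{\varepsilon})\in\Upsilon^{\U}_x$, i.e., the formula $\Ltwon{\mathtt{y}}\ge\varepsilon$ belongs to $\Upsilon^{\U}_x$; applying the analogous neighborhood analysis to the approximations $\Ltwon{\mathtt{y}}\ge\varepsilon-\delta$ forces $\{n:\Ltwon{\AV_n(x)}\ge\varepsilon/2\}\in\U$, hence $\Ulim_n\Ltwon{\AV_n(x)}\ge\varepsilon/2>0$.

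Combining the two directions with the ultrafilter characterization of convergence finishes the proof. The only mildly delicate point — and what I expect will be the main obstacle — is the translation between the borderline formula $\psi^0$, which asserts the \emph{exact} equality $\Ltwon{\mathtt{y}}=0$, and its quantitative $\varepsilon$-approximations $\psi^{\varepsilon}$; but this is handled cleanly by the completeness clause in the definition of a type, which supplies a quantitative $\Neg(\psi^{\varepsilon})$ precisely when $\psi^0$ is absent. Everything else is routine bookkeeping with the logic topology.
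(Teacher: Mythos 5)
Your proof is correct and takes essentially the same route as the paper's: both reduce nullity to the statement that, for each $\varepsilon>0$, the set $\{n\in\NN : \Ltwon{\AV_n(x)}\le\varepsilon\}$ lies in every nonprincipal ultrafilter (equivalently, is cofinite), via standard ultrafilter facts. Your explicit appeal to the completeness clause of types to pass between $\psi^0$ and its approximations $\psi^{\varepsilon}$ (and the per-$\U$ ultralimit bookkeeping) merely spells out steps that the paper's terser argument leaves implicit.
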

\begin{proof}
  By properties of ultrafilters we have, for each $\varepsilon > 0$, the statement:
  \begin{equation*}
    \text{``For all $\U\in\beta\mathbb{N}\setminus \mathbb{N}$ : $\{n\in\NN \mid \Ltwon{\AV_n(x)} \le \varepsilon \} \in \U$"}
  \end{equation*}
is equivalent to the statement: ``$\Ltwon{\AV_n(x)}\le\varepsilon$ for all sufficiently large~$n\in\NN$''.
Thus, the formulas $\psi^{\varepsilon}(\mathtt{y})$ for all $\varepsilon > 0$ belong to~$\Upsilon_x$, and so $x$ is null, precisely when $\lim_{n\to\infty} \Ltwon{\AV_n(x)} = 0$ in the usual sense.
\end{proof}

Let $\tA_{\U}$ be the set of realizations of $\Xi^{\U}$ in~$\tS$ (i.e., in~$\tB$), and let $\tA = \bigcup_{\U\in\beta\mathbb{N}\setminus \mathbb{N}} \tA_{\U}$. 
Let $\tHn_{\U}$ be the set of $\U$-null elements in~$\tH$, 
and $\tHn = \bigcap_{\U\in\beta\mathbb{N}\setminus \mathbb{N}}\tHn_{\U}$ the set of null elements of~$\tH$.
After Tao, we use the adjective \emph{``$\U$-pseudorandom''} to refer to any element $x\in\tH$ with $\Ltwon{A(x)} = 0$ for all $A\in\tA_{\U}$.
Similarly, ``\emph{pseudorandom}'' means ``$\U$-pseudorandom for all $\U\in\beta\mathbb{N}\setminus \mathbb{N}$''. 
Let $\tHr_{\U}$ be the set of $\U$-pseudorandom elements of~$\tH$.
Since $\tHr_{\U} = \bigcap_{A\in\tA_{\U}}\ker(A)$, $\tHr_{\U}$ is a closed subspace of~$\tH$.

\begin{lemma}
  \label{lem:r-implies-n}
For each ultrafilter $\U\in\beta\mathbb{N}\setminus \mathbb{N}$, $\tHr_{\U} \subset \tHn_{\U}$. 
(Every $\U$-pseudorandom element is $\U$-null.)
\end{lemma}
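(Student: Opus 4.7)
\emph{Proof strategy.} Fix $x \in \tHr_{\U}$; the goal is to show $\psi^0 \in \Upsilon^{\U}_x$. The plan is to realize the joint type $\Sigma^{\U}_x$ in $\tS$, then conjugate the realization onto $x$ by an automorphism supplied by strong homogeneity, and finally invoke the $\U$-pseudorandomness hypothesis to collapse the image.

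First, I would apply $\card(\cS)^+$-saturation of $\tS$ to obtain a pair $(A, y) \in \tB \times \tH$ realizing $\Sigma^{\U}_x$.  Marginalization---forming the type of one subtuple of variables by restricting to formulas that mention only those variables---commutes with ultralimits in the logic topology, essentially because a basic neighborhood $[\varphi]$ for a formula $\varphi$ in one variable remains basic when viewed in the richer signature.  Thus $\tp_{\cS}(A) = \Ulim_n \tp_{\cS}(\AV_n) = \Xi^{\U}$, so $A \in \tA_{\U}$, while $\tp_{\cS}(y) = \Ulim_n \tp_{\cS}(x) = \tp_{\cS}(x)$ (the ultralimit of a constant sequence).

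Next, since $y$ and $x$ share the same type over $\cS$---and hence over any dense subset of $\cS$, to which strong homogeneity of $\tS$ directly applies, with the extension to all of $\cS$ following by continuity of automorphisms---I would produce an automorphism $f$ of $\tS$ fixing $\cS$ pointwise with $f(y) = x$.  Because $\Xi^{\U}$ has parameters only in $\cS$, $\tp_{\cS}(f(A)) = \tp_{\cS}(A) = \Xi^{\U}$, so $f(A) \in \tA_{\U}$.  The $\U$-pseudorandomness hypothesis then gives $f(A)(x) = 0$; applying $f^{-1}$, which is an isometry commuting with evaluation, yields $A(y) = 0$.  An analogous ``substitution commutes with ultralimit'' argument---this time replacing the term $\mathtt{B}(\mathtt{x})$ for the variable $\mathtt{y}$---shows $\tp_{\cS}(A(y)) = \Upsilon^{\U}_x$, so $\psi^0 \in \Upsilon^{\U}_x$ as required.

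The main technical point to verify carefully is that ultralimits in the logic topology commute with both of the type-theoretic operations used: marginalization over a subtuple of variables, and substitution of a composite term for a variable.  Both are essentially immediate from the characterization of the logic topology by basic neighborhoods $[\varphi]$, but deserve to be spelled out.  The conceptual engine, however, is the homogeneity step: strong homogeneity upgrades ``having the same type over $\cS$'' to ``being conjugate by an automorphism fixing $\cS$,'' which is precisely what lets the $\cS$-type invariant property of $\U$-pseudorandomness be transferred from $x$ to its companion element $y$.
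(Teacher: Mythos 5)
Your proposal is correct and follows essentially the same route as the paper's proof: saturation realizes $\Sigma^{\U}_x$ by a pair $(A,y)$, strong homogeneity supplies an automorphism $f$ fixing $\cS$ pointwise with $f(y)=x$, and $\U$-pseudorandomness applied to $f(A)\in\tA_{\U}$ yields $\psi^0\in\Upsilon^{\U}_x$. The only cosmetic difference is that the paper reads the conclusion off directly from the realization $(f(A),x)$ of $\Sigma^{\U}_x$, whereas you pull back by $f^{-1}$ and compute with $(A,y)$; your explicit observation that marginalization and term substitution commute with ultralimits makes precise what the paper establishes beforehand via the chain $\tp_{\cS}(x)\models\Sigma^{\U}_x\models\Upsilon^{\U}_x$.
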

\begin{proof}
$\U$ is fixed throughout the proof.
For now, let $x$ be any element of $\tH$. 
$\Sigma^{\U}_x$ is realized in~$\tS$, hence there exist $A\in\tA_{\U}$ and $y\in\tH$ with $\tp_{\cS}(y) = \tp_{\cS}(x)$ such that $(A,y)$ realizes~$\Sigma^{\U}_x$.
By homogeneity, there exists an automorphism $f : \tS\to\tS$ fixing~$\cS$ such that $f(y) = x$.
Let $A' = f(A)$.
Clearly, $(A',x)$ realizes~$\Sigma^{\U}_x$ (because $f$ preserves satisfaction of $L$-formulas as it fixes~$\cS$).
In particular, $A'\in\tA_{\U}$.

Since $\Sigma^{\U}_x\models\Upsilon_{\U}$, we have
$\psi^0(\mathtt{y}) \in \Upsilon^{\U}_x$ if and only if $\varphi^0(\mathtt{B},\mathtt{x}) = \psi^0(\mathtt{B}(\mathtt{x})) \in \Sigma^{\U}_x$. 

Assume now that $x$ is $\U$-pseudorandom.
For some $A'\in\tA_{\U}$, $(A',x)$ realizes $\Sigma^{\U}_x$; moreover, $\Ltwon{A'(x)} = 0$ since $x$ is $\U$-pseudorandom.
Therefore, $\varphi^0(\mathtt{B},\mathtt{x}) \in \tp_{\cS}(A',x)  = \Sigma^{\U}_x$, so $\psi^0(\mathtt{y}) \in \Upsilon^{\U}_x$ and $x$ is $\U$-null.
\end{proof}

Let $\tHr = \bigcap_{\U\in\beta\mathbb{N}\setminus \mathbb{N}} \tHr_{\U}$ be the set of all pseudorandom elements of~$\tH$.
Clearly, $\tHr$ is a closed subspace of~$\tH$.

\begin{corollary}\label{cor:r-implies-n}
We have $\tHr\subset\tHn$.  
Every element of $\tHr$ is null-ergodic:
\begin{equation*}
  x\in\tHr\qquad\text{implies}\qquad \lim_{n\to\infty}\Ltwon{\AV_n(x)} = 0.
\end{equation*}
\end{corollary}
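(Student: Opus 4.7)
The corollary is essentially a direct consequence of Lemma~\ref{lem:r-implies-n} combined with Lemma~\ref{lem:nullity}, so my plan is to assemble it in two short steps with no extra machinery.

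First, I would establish the inclusion $\tHr \subset \tHn$ by a straightforward intersection argument. By definition, $\tHr = \bigcap_{\U\in\beta\mathbb{N}\setminus\mathbb{N}} \tHr_{\U}$ and $\tHn = \bigcap_{\U\in\beta\mathbb{N}\setminus\mathbb{N}} \tHn_{\U}$. Lemma~\ref{lem:r-implies-n} gives $\tHr_{\U} \subset \tHn_{\U}$ for each ultrafilter $\U$, so intersecting both sides over all nonprincipal ultrafilters on $\NN$ yields $\tHr \subset \tHn$ immediately.

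Second, I would upgrade nullity to the desired norm convergence. Fix $x \in \tHr$; then $x \in \tHn$ by the previous step, meaning $x$ is $\U$-null for every $\U \in \beta\mathbb{N}\setminus\mathbb{N}$. Lemma~\ref{lem:nullity} then says exactly that $\Ltwon{\AV_n(x)} \to 0$ as $n \to \infty$, which is the claimed conclusion.

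There is no real obstacle here since both required inclusions have already been proved in the preceding two lemmas; the corollary simply records their conjunction in the form that will be used later. The only thing to be careful about is matching notation — in particular, keeping $\tHr_\U$, $\tHn_\U$, and their intersections over $\U$ straight — but no further analysis or model-theoretic work is needed.
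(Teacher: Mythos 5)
Your proof is correct and follows exactly the paper's route: the paper also derives the corollary as an immediate consequence of Lemmas~\ref{lem:nullity} and~\ref{lem:r-implies-n}, and you have merely (and properly) spelled out the intersection-over-ultrafilters step that the paper leaves implicit.
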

\begin{proof}
  Immediate consequence of lemmas~\ref{lem:nullity} and~\ref{lem:r-implies-n}.
\end{proof}

\begin{lemma}\label{lem:G-invar} For any ultrafilter $\U\in\beta\mathbb{N}\setminus \mathbb{N}$, every $g\in G$, every $n\in\NN$, and any realization $A$ of $\Xi^{\U}$, we have
\begin{enumerate}
\item $\Ltwon{T_g\circ A - A} = 0$, and
\item $\Ltwon{A\circ T_g-A} = 0$.
\end{enumerate}
\end{lemma}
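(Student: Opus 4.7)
My plan is to reduce both assertions to the \Folner\ property of the sequence $\{\G_n\}$, and then transfer the resulting quantitative statements about $\AV_n$ to the realization $A$ by unwinding the definition $\Xi^{\U} = \Ulim_n \tp_{\cS}(\AV_n)$.

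First I would perform the elementary calculation on $\AV_n$ itself. By definition of $\AV_n$ and of the representation $T_g$,
\begin{equation*}
T_g\circ \AV_n \;=\; \frac{1}{\#\G_n}\sum_{h\in\G_n} T_{gh} \;=\; \frac{1}{\#\G_n}\sum_{h'\in g\G_n} T_{h'},
\end{equation*}
so that $T_g\circ\AV_n - \AV_n$ is $(\#\G_n)^{-1}$ times the difference of the sums of $T_{h'}$ over $g\G_n\setminus\G_n$ and over $\G_n\setminus g\G_n$. Since each $T_{h'}$ is unitary (norm $1$), the triangle inequality gives
\begin{equation*}
\Ltwon{T_g\circ\AV_n - \AV_n} \;\le\; \frac{\#(g\G_n\,\triangle\,\G_n)}{\#\G_n},
\end{equation*}
which tends to $0$ as $n\to\infty$ by the defining property of a \Folner\ sequence. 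Because $G$ is abelian we have $\AV_n\circ T_g = T_g\circ\AV_n$ for every $n$, so the same bound holds for $\Ltwon{\AV_n\circ T_g - \AV_n}$.

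Next I would transfer this to $\Xi^{\U}$. For each fixed $g\in G$ and each real $\varepsilon>0$, let $\chi_{g,\varepsilon}(\mathtt{B})$ denote the positive bounded $L[\cS]$-formula $\Ltwon{\mathtt{T}_g\circ \mathtt{B} - \mathtt{B}}\le\varepsilon$, and similarly $\chi'_{g,\varepsilon}(\mathtt{B})$ for $\Ltwon{\mathtt{B}\circ \mathtt{T}_g - \mathtt{B}}\le\varepsilon$. By the \Folner\ estimate above, the set
\begin{equation*}
\{\,n\in\NN\mid \cS\models\chi_{g,\varepsilon}[\AV_n]\,\}
\end{equation*}
is cofinite and therefore belongs to every $\U\in\beta\NN\setminus\NN$; consequently $\chi_{g,\varepsilon}\in\Ulim_n\tp_{\cS}(\AV_n) = \Xi^{\U}$, and likewise for $\chi'_{g,\varepsilon}$. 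Now if $A\in\tB$ realizes $\Xi^{\U}$, then $\tS\appmod\chi_{g,\varepsilon}[A]$ and $\tS\appmod\chi'_{g,\varepsilon}[A]$ for every $\varepsilon>0$. Since the underlying atomic inequalities are quantifier-free, approximate satisfaction is equivalent to exact satisfaction; letting $\varepsilon\to 0^+$ we conclude $\Ltwon{T_g\circ A - A} = 0$ and $\Ltwon{A\circ T_g - A} = 0$, as desired.

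There is no real obstacle here; the only point requiring care is the passage from the fact that $\chi_{g,\varepsilon}$ holds of $\AV_n$ for $\U$-almost all $n$ to the statement that $\chi_{g,\varepsilon}\in\Xi^{\U}$, which is exactly the content of the definition of the ultralimit of types in the logic topology of Section~\ref{sec:topology}. The role of abelianness is to ensure that $T_g$ commutes with each $\AV_n$, so that the second identity needs no separate \Folner-type argument beyond the first.
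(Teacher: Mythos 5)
Your proposal is correct and follows essentially the same route as the paper's own proof: the same \Folner\ estimate $\Ltwon{T_g\circ\AV_n-\AV_n}\le \#(g\G_n\triangle\G_n)/\#\G_n$, the same transfer of the formulas $\Ltwon{\mathtt{T}_g\circ\mathtt{B}-\mathtt{B}}\le\varepsilon$ into $\Xi^{\U}$ via cofiniteness in every nonprincipal ultrafilter, the same quantifier-free approximate-implies-exact step at a realization $A$, and the same appeal to commutativity of the $T_g$ to dispose of part (2).
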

\begin{proof}
Let $\sigma^{\varepsilon}_g(\mathtt{B})$ be the formula $\Ltwon{\mathtt{T}_g\circ \mathtt{B} - \mathtt{B}} \le \varepsilon$.

With $C_n = \#\G_n$ (where $\#P$ denotes the cardinality of the set~$P$), let $e(g,n) \coloneqq \#(\G_n\triangle  g\G_n)/C_n$. 
(Here, $P\triangle Q = (P\setminus Q)\cup(Q\setminus P)$ is the symmetric difference of the sets $P, Q$.) By definition of \Folner\ sequence we have $e(g,n)\to 0$ as $n\to\infty$. 

Clearly, $\Ltwon{A}\le 1$ inasmuch as $\Xi^{\U}$ contains the formula ``$\Ltwon{\mathtt{B}}\le 1$'' ($\AV_n$ is a convex combination of the norm-$1$ (unitary) operators $T_g$, so $\Ltwon{\AV_n} \le 1$ holds for all~$n\in\NN$).
Now,
  \begin{equation*}
    \begin{split}
      \Ltwon{T_g\circ\AV_n - \AV_n} &=
      \Ltwon{\frac{1}{C_n}\sum_{h\in\G_n}T_g\circ T_h -
        \frac{1}{C_n}\sum_{h\in\G_n} T_h}
      \\
      &= \Ltwon{\frac{1}{C_n}\sum_{h\in g\G_n}T_h -
        \frac{1}{C_n}\sum_{h\in\G_n} T_h}
      \qquad\text{since $T_{gh} = T_g\circ T_h$}\\
      &= \Ltwon{\frac{1}{C_n}\sum_{h\in g\G_n\setminus\G_n} T_h - \frac{1}{C_n}\sum_{h\in \G_n\setminus g\G_n} T_h} \\
      & \le C_n^{-1}\sum_{h\in g\G_n\setminus\G_n} \Ltwon{T_h} +
      C_n^{-1}\sum_{h\in \G_n\setminus g\G_n} \Ltwon{T_h}
      \\
      &= C_n^{-1}\#(\G_n\triangle g\G_n)\cdot 1 = e(g,n).
    \end{split}
\end{equation*}
Therefore, $\Ltwon{T_g\circ\AV_n - \AV_n} \le \varepsilon$ (i.e., $\varphi_g^{\varepsilon}(\mathtt{B}) \in \tp_{\cS}(\AV_n)$) for all sufficiently large~$n$.
Thus, $\varphi_g^{\varepsilon}(\mathtt{B}) \in \Xi^{\U}$ (for any~$\U$).
Since $A$ realizes~$\Xi^{\U}$,  the (quantifier-free) statement~$\varphi^0_g(A)$ is approximately, hence exactly, satisfied, so $\Ltwon{T_g\circ A-A} = 0$, proving~(1). 

Statement~(2) follows from the fact that the operators $\{T_g\mid g\in G\}$ commute in (any elementary $\approx$-extension of)~$\cS$, i.e., $\Ltwon{T_g\circ T_h - T_h\circ T_g} = 0$, whence $\Ltwon{\AV_n\circ T_g-\AV_n} = \Ltwon{T_g\circ\AV_n - \AV_n}$ follows by the triangle inequality, and the argument above shows $\Ltwon{A\circ T_g - A} = 0$. 
\end{proof}

\begin{corollary}\label{cor:G-invar}
For any ultrafilter $\U\in\beta\mathbb{N}\setminus \mathbb{N}$, every $g\in G$, every $n\in\NN$, and any realization $A$ of $\Xi^{\U}$, we have:
\begin{enumerate}
\item $\Ltwon{T_g\circ A^{*} - A^{*}} = 0$, and
\item $\Ltwon{A^{*}\circ T_g - A^{*}} = 0$.
\end{enumerate}
\end{corollary}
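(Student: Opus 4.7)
The plan is to derive Corollary \ref{cor:G-invar} directly from Lemma \ref{lem:G-invar} by taking adjoints. Two structural facts do all the work: (i) the involution of the $C^*$-algebra $\tB$ is an isometric antihomomorphism, so $\Ltwon{X^{*}} = \Ltwon{X}$ and $(X\circ Y)^{*} = Y^{*}\circ X^{*}$ for all $X, Y \in \tB$; and (ii) each $T_g$ is unitary, so $T_g^{*} = T_g^{-1} = T_{g^{-1}}$, with $g\mapsto g^{-1}$ a bijection of $G$.

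Fix $\U \in \beta\mathbb{N}\setminus\mathbb{N}$, $g\in G$, and let $A$ be any realization of $\Xi^{\U}$. By Lemma \ref{lem:G-invar}(1), $\Ltwon{T_g\circ A - A} = 0$. Applying adjunction and the isometry property gives
\[
\Ltwon{A^{*}\circ T_{g^{-1}} - A^{*}} \;=\; \Ltwon{A^{*}\circ T_g^{*} - A^{*}} \;=\; \Ltwon{(T_g\circ A - A)^{*}} \;=\; 0.
\]
Since $g\mapsto g^{-1}$ is a bijection of $G$, quantifying over all $g\in G$ yields statement (2) of the corollary. The identical maneuver starting from Lemma \ref{lem:G-invar}(2) produces $\Ltwon{T_{g^{-1}}\circ A^{*} - A^{*}} = 0$, which is statement (1) after reindexing.

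The only point requiring comment is that this passage from an identity about $A$ to one about $A^{*}$ is not an appeal to any formula in the type $\Xi^{\U}$: the identities $\Ltwon{X^{*}} = \Ltwon{X}$ and $(X\circ Y)^{*} = Y^{*}\circ X^{*}$ are consequences of the $C^*$-algebra axioms satisfied by $\cB$ and, by approximate elementary extension, by $\tB$; the equality $T_g^{*} = T_{g^{-1}}$ similarly reflects the unitarity of each $T_g$ and the group-representation structure of $\cS$, both of which transfer to $\tS$. There is no real obstacle here---the corollary is essentially a one-line consequence of the lemma.
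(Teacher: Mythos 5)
Your proof is correct and takes essentially the same route as the paper, which likewise deduces the corollary from Lemma~\ref{lem:G-invar} by taking adjoints and using $\Ltwon{T^{*}_g - T_{g^{-1}}} = 0$ for unitary $T_g$. Your closing remark---that the $C^*$-identities and $T_g^{*} = T_{g^{-1}}$ hold in $\tB$ because they transfer from $\cS$ along the approximate elementary extension---merely makes explicit what the paper leaves implicit.
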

\begin{proof}
  Both statements follow from lemma~\ref{lem:G-invar} by taking adjoints and using the fact that $\Ltwon{T^{*}_g-T_{g^{-1}}} = 0$ (since the $T_g$ are unitary by hypothesis). 
\end{proof}

After Tao, let $\tHs_{\U} = (\tHr_{\U})^{\perp}$ (the set of \emph{$\U$-structured} elements of~$\tHs$) be the orthogonal complement of~$\tHr_{\U}$, and let $\tHs = (\tHr)^{\perp}$ (the set of (all) \emph{structured} elements).
For each $A\in\tA$, $(\ker A)^{\perp} = \img(A^{*})$ ($A$ is a bounded operator on the Hilbert space~$\tH$).
Since $\tHr = \bigcap_{A\in\cA}\ker A$, the set $\tHs$ of structured elements is the closure of the set 
\begin{equation*}
  \tHsf = \left\{\overline{A^{*}}\cdot \overline{x} \coloneqq \sum_{i=1}^n A_i^{*}(x_i)\right\}
\end{equation*}
of finite linear combinations of elements $A_i^{*}(x_i)$ in the images $\img(A_i^{*})$, for some tuples $\overline{A} = A_1,\dots,A_n$ in $\cA$, and $\overline{x} = x_1,\dots,x_n$ in~$\tH$, both of some (non-fixed) finite length~$n$.

Recall that $\GtH$ is the set of $G$-fixed elements of~$\tH$.

\begin{lemma}\label{lem:G-fixed-disjoint-null}
 $x\in\GtH\cap\tHn$ implies~$\Ltwon{x} = 0$, i.e., $\GtH$ and $\tHn$ are independent subspaces of~$\tH$.
\end{lemma}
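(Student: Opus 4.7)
My plan is to reduce the claim directly to Lemma~\ref{lem:nullity} via the observation that on $G$-fixed vectors, each averaging operator $\AV_n$ acts as the identity; there is nothing to do beyond this.

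First I would unpack the hypothesis $x \in \GtH$. By definition this means $T_g(x) = x$ (in the metric sense $\Ltwon{T_g(x) - x} = 0$, which is the equality formula $\iota$) for every $g \in G$. Consequently, for every $n \in \NN$,
\begin{equation*}
\AV_n(x) \;=\; \frac{1}{C_n}\sum_{g\in\G_n} T_g(x) \;=\; \frac{1}{C_n}\sum_{g\in\G_n} x \;=\; x,
\end{equation*}
so $\Ltwon{\AV_n(x)} = \Ltwon{x}$ for every $n$.

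Next I would apply the hypothesis $x\in\tHn$ through Lemma~\ref{lem:nullity}, which gives $\Ltwon{\AV_n(x)} \to 0$ as $n\to\infty$. Combined with the identity above, this forces $\Ltwon{x} = 0$, as desired. One small ancillary point to record on the way is that Lemma~\ref{lem:nullity} applies to all elements of the Hilbert sort of $\tS$ (not only to elements of the original $\cH$); this is immediate from its proof, which only uses the ultrafilter characterization of $\Upsilon^{\U}_x$ and so makes sense verbatim for any $x \in \tH$.

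There is no real obstacle here: the conceptual content is already packed into Lemma~\ref{lem:nullity} and, further upstream, into the \Folner\ property used to establish the asymptotic $G$-invariance of $\AV_n$. Once that invariance is \emph{exact}, as it is on $G$-fixed vectors, the whole argument collapses to the one-line computation above.
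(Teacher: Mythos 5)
Your proposal is correct and matches the paper's own proof essentially verbatim: both observe that $G$-fixed vectors are $\AV_n$-fixed by linearity, so $\Ltwon{x} = \Ltwon{\AV_n(x)}$ for all $n$, and then invoke Lemma~\ref{lem:nullity} to conclude $\Ltwon{x} = 0$. Your ancillary remark that Lemma~\ref{lem:nullity} applies to all of $\tH$ is also consistent with the paper's usage, where the types $\Upsilon^{\U}_x$ are defined for arbitrary $x\in\tH$.
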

\begin{proof}
Observe that elements of $\GtH$ are $\AV_n$-fixed for all~$n\in\NN$, by linearity. 
Let $x\in\GtH\cap\tHn$. 
We have,
  \begin{equation*}
    \begin{split}
      \Ltwon{x} &= \Ltwon{\AV_n(x)} \qquad\text{for all $n$, since
        $x\in\GtH$}\\
   &=  \lim_{n\to\infty} \Ltwon{\AV_n(x)} = 0 \qquad\text{by lemma~\ref{lem:nullity}, since $x\in\tHn$.}\qedhere
    \end{split}
  \end{equation*}
\end{proof}

\begin{lemma}\label{lem:s-implies-G}
$\tHs\subset\GtH$, i.e., $\Ltwon{T_g(x)-x} = 0$ for all $x\in\tHs$ and all $g\in G$.
\end{lemma}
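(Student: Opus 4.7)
The plan is to deduce this lemma directly from Corollary~\ref{cor:G-invar} together with the description of $\tHs$ as the closure of $\tHsf$. The key observation is that Corollary~\ref{cor:G-invar}(1) tells us $\Ltwon{T_g\circ A^{*}-A^{*}} = 0$ for every $g\in G$ and every $A\in\tA$. Since $\tB$ is identified with a $C^{*}$-subalgebra of bounded endomorphisms of~$\tH$, this operator-norm identity means $T_g(A^{*}(x)) = A^{*}(x)$ for every $x\in\tH$, i.e., each vector of the form $A^{*}(x)$ with $A\in\tA$ lies in $\gtH$ for every $g\in G$, hence in~$\GtH$.

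Next, I would use the fact that $\GtH = \bigcap_{g\in G}\ker(T_g-I)$ is a (closed) linear subspace of~$\tH$. By linearity, every finite combination $\sum_{i=1}^n A_i^{*}(x_i)\in\tHsf$ also belongs to~$\GtH$. Finally, since $\GtH$ is closed and $\tHs$ is by definition the norm-closure of~$\tHsf$, we conclude $\tHs\subset\GtH$, which is exactly the statement~$\Ltwon{T_g(x)-x} = 0$ for all $x\in\tHs$ and $g\in G$.

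There is essentially no obstacle here: all the work was done in Lemma~\ref{lem:G-invar} and Corollary~\ref{cor:G-invar}, where the \Folner\ property of $\{\G_n\}$ was used to show that the averaging operators (and their weak$^{*}$/ultralimit residues $A\in\tA$) absorb left (and right) multiplication by~$T_g$. The only point that requires a brief comment is the passage from the operator-norm equality $\Ltwon{T_g\circ A^{*}-A^{*}}=0$ to pointwise invariance $T_g(A^{*}(x))=A^{*}(x)$, which is just the inequality
\begin{equation*}
\Ltwon{T_g(A^{*}(x))-A^{*}(x)} \le \Ltwon{T_g\circ A^{*}-A^{*}}\cdot\Ltwon{x} = 0.
\end{equation*}
Closedness of $\GtH$ then takes care of the passage from $\tHsf$ to its closure $\tHs$.
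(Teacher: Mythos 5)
Your proof is correct and is essentially the paper's own argument: both establish $T_g$-invariance of each $A_i^{*}(z_i)$ from Corollary~\ref{cor:G-invar}, extend to $\tHsf$ by linearity, and pass to the closure $\tHs$ (the paper writes out the closure step as an explicit $\varepsilon$-estimate $\Ltwon{T_g(x)-x}\le\Ltwon{T_g(y)-y}+2\Ltwon{w}<2\varepsilon$, which is just your appeal to the closedness of $\GtH=\bigcap_{g\in G}\ker(T_g-I)$ unwound). Incidentally, your citation of item~(1) of the corollary is the apt one, since $T_g(A^{*}(z))-A^{*}(z)=(T_g\circ A^{*}-A^{*})(z)$; the paper's reference to item~(2) at that point appears to be a slip of labeling.
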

\begin{proof}
Let $x\in\tHs$, $g\in G$.
Let $\varepsilon > 0$ be arbitrary.
Choose $n$-tuples $\overline{A}$, $\overline{z}$ such that $y \coloneqq \overline{A^{*}}\cdot \overline{z} \in \tHsf$, and $w \coloneqq x-y$ satisfies $\Ltwon{w} \le \varepsilon$.
By (2)~of corollary~\ref{cor:G-invar} and the inequality $\Ltwon{B(v)} \le \Ltwon{B}\Ltwon{v}$, we have $\Ltwon{T_g(A_i^{*}(z_i)) - A_i^{*}(z_i)} = 0$ for each~$i$.
Thus, $\Ltwon{T_g(y)-y} = 0$ follows by linearity (and the triangle inequality), so we have $\Ltwon{T_g(x)-x} \le \Ltwon{T_g(y)-y} + \Ltwon{T_g(w)-w} \le 0+2\Ltwon{w} < 2\varepsilon$ for all $n\in\NN$.
Since $\varepsilon>0$ was arbitrary, we have $\Ltwon{T_g(x)-x} = 0$ for all $g\in G$.
Thus, $x\in\GtH$.
\end{proof}

\begin{proof}[Proof of the Mean Ergodic Theorem]
  First, we show that the limit of any sequence $\{\AV_n(x)\mid n\in\NN\}$ (for any~$x\in\tH$) exists.

Write $x = \xr + \xs$ with $\xr\in\tHr$ and $\xs\in\tHs$.

We have $\lim_{n\to\infty}\Ltwon{\AV_n(\xr)} = 0$, by corollary~\ref{cor:r-implies-n}.

Now, from lemma~\ref{lem:s-implies-G} and linearity, $\Ltwon{\AV_n(\xs)-\xs} = 0$ for all $n\in\NN$.
Therefore, $\lim_{n\to\infty}\AV_n(\xs) = \xs$.
Define $\Pi : \tH\to\tH$ by $\Pi(x) = \xs$ when $x = \xr + \xs$ as above.
($\Pi$ is the orthogonal projection onto~$\tHs$.) 
Thus, $\Pi(x) = \lim_{n\to\infty}\AV_n(x)$.
Note that $\Pi$ is ``external'' in the sense that $\Pi$ may not be realized by any $B\in\tB$, but $\Pi$ is nonetheless a \emph{bona fide} bounded operator on the Hilbert space~$\tH$ (strictly speaking, the genuine Hilbert space is the set $\tH$ modulo the equivalence relation $\Ltwon{\mathtt{v}-\mathtt{u}} = 0$).

Next, we show that $\Pi(x)$ is, in fact, the projection on the space~$\GtH$ of $G$-fixed points of~$\tH$.
Clearly, it suffices to show that~$\tHs = \GtH$.
Let $\Pi$ be the orthogonal projection~$\tH\to\GtH$.
We have:
\begin{enumerate}
\item $\tHr \subset \tHn$, by corollary~\ref{cor:r-implies-n};
\item $\tHs \subset \GtH = \img\Pi$, by lemma~\ref{lem:s-implies-G};
\item $\tHn$ and $\GtH$ are independent, by lemma~\ref{lem:G-fixed-disjoint-null}.
\item $\tHr$ and $\tHs$ generate $\tH$, since $\tHs$ is the (orthogonal) complement of~$\tHr$ (and~$\tH$ is Hilbert, hence complete).
\end{enumerate}
We conclude that $\tHs = \GtH$.
Therefore, $\Pi(x)$ is the orthogonal projection of~$x$ on~$\GtH$.

It remains to show that $\Pi\restriction_{\cH}$ agrees (point-by-point) with the orthogonal projection operator $\pi : \cH\to\GH$.
The key property is that both spaces $\GtH = \img\Pi$ and $(\GtH)^{\perp} = \ker\Pi$ are (semi)definable over~$\cS$ in a technical sense we will not presently discuss, but that is presently captured by the fact that the properties ``$x\in\GtH$'', ``$x\in\big(\GtH\big)^{\perp}$'' depend only on the type of~$x$ over~$\cS$.

Clearly, $x\in\GtH$ if and only if $\tp_{\cS}(x)$ includes all the formulas ``$\Ltwon{\mathtt{T}_g(\mathtt{x})-\mathtt{x}} = 0$'' for $g\in G$. 
Moreover, these are exactly the formulas ensuring that an element $x\in\cH$ belongs to~$\GH$; in particular, $\GH = \GtH\cap\cH$. 
Further, since $\img(\tT_{g^{-1}}-I) = \img (\tT^{*}_g-I) = \ker(\tT_g-I)^{\perp}$, we have $x\in\big(\GtH\big)^{\perp}$ if and only if for every $\varepsilon > 0$ there exists $r\in\R$, a finite tuple~$\overline{g} = g_1,\dots,g_k$ in~$G$, and a formula
\begin{equation*}
  \varphi_{\overline{g}}(\mathtt{x}) : \exists^r\overline{\mathtt{y}}(\Ltwon{\mathtt{x} 
+ \mathtt{y}_1 + \dots + \mathtt{y}_k
- \mathtt{T}_{g_1}(\mathtt{y}_1) - \dots - \mathtt{T}_{g_k}(\mathtt{y}_k)} \le \varepsilon)
\end{equation*}
in $\tp_{\cS}(x)$.
These, however, are exactly the conditions that ensure that an element $x\in\cH$ belongs to~$\GH^{\perp}$.
Thus, $\GH^{\perp} = \big(\GtH\big)^{\perp} \cap \cH$.
We conclude that $\pi$ is the restriction of~$\Pi$ to~$\cH$, finishing the proof of theorem~\ref{thm:MET} on~$\cH$.
\end{proof}

\section{Concluding remarks}
\label{sec:conclusion}

It appears that \emph{a priori} knowledge of (expected) properties of $\Pi$ in the context of von Neumann's result enables the existence of slick short proofs such as Riesz's.
A ``nonstandard analytic'' approach (i.e., the framework of types in Banach structures in the context of this manuscript) seems most valuable when there is insufficient \emph{a priori} knowledge about the nature of any potential limits.

In this manuscript, $\Pi$ is external, although its values $\Pi(x)$ can fortunately be described in the language~$L$ through the type of the point~$x$.
In a more general context such as that of ergodic averages \emph{\`a la} Walsh one cannot hope, in general, to give a very explicit description of the operator $\Pi = \lim_{n\to\infty}\AV_n(\cdot)$.

As a final remark, Zorin-Kranich~\cite{ZorinKr2011} has generalized Walsh's result to polynomial nilpotent actions of amenable groups in precisely the manner that Wiener's theorem (theorem~\ref{thm:MET} here) extends the discrete-time formulation of von Neumann's mean ergodic theorem.

\bibliography{bibdatabase,iovino}
\bibliographystyle{halpha}

\end{document}